\documentclass[11pt]{amsart}

\usepackage[T1]{fontenc}
\usepackage{lmodern}
\usepackage{amsmath,amssymb,mathtools}
\usepackage{enumitem}
\usepackage{microtype}
\usepackage{url}
\usepackage[colorlinks=true,citecolor=blue,linkcolor=blue,urlcolor=blue]{hyperref}

\newtheorem{theorem}{Theorem}[section]
\newtheorem{corollary}[theorem]{Corollary}
\newtheorem{proposition}[theorem]{Proposition}
\newtheorem{lemma}[theorem]{Lemma}
\newtheorem{remark}[theorem]{Remark}
\theoremstyle{definition}
\newtheorem{definition}[theorem]{Definition}

\newcommand{\CA}{\operatorname{CA}}

\newcommand{\Nzero}{\mathbb{N}_0}
\newcommand{\Ftwo}{\mathbb{F}_2}

\title[A reversibility characterization of locally finite groups]{A Reversibility Characterization of Locally Finite Groups by Cellular Automata}

\author{Jiang Yang}
\address{School of Mathematical Sciences, Guangxi Minzu University, Nanning, China}
\email{yangjiangdy@126.com}

\subjclass[2020]{37B15, 68Q80, 20F50}
\keywords{cellular automata over groups, reversibility, locally finite groups, infinite alphabet, Burnside groups}

\begin{document}

\begin{abstract}
For cellular automata over finite alphabets, bijectivity already implies reversibility.  Over infinite alphabets this implication may fail, and the remaining obstruction in the periodic case was recorded by Ceccherini-Silberstein and Coornaert as Open Problem 2 in \emph{Cellular Automata and Groups}.  We prove an exact group-theoretic characterization.  A group $G$ is locally finite if and only if, over every alphabet, every bijective cellular automaton $A^G\to A^G$ is reversible.  Equivalently, if $G$ is not locally finite, then for every infinite alphabet $A$ there exists a bijective cellular automaton $A^G\to A^G$ whose inverse is not a cellular automaton.  The counterexample is already obtained on a countable alphabet.  Its local rule has a rank track, a direction track and a binary data track; the forward map is triangular along finite directed chains of arbitrary length, so its inverse is defined pointwise but has no uniform finite memory.  As a consequence, Open Problem 2 has an affirmative answer, and the periodicity hypothesis is unnecessary for the negative direction.
\end{abstract}

\maketitle

\section{Introduction}

Cellular automata over groups provide a common language for symbolic dynamics, geometric group theory and finite-memory computation.  Their interaction with group-theoretic properties is already visible in the Garden of Eden theorem for amenable groups; see \cite{CSMS1999} and the systematic treatment in \cite{CAG2}.  If $G$ is a group and $A$ is a set, a cellular automaton $\tau\colon A^G\to A^G$ is a shift-equivariant map whose value at each site is determined by a fixed finite neighbourhood.  When $A$ is finite, the classical compactness argument behind the Curtis-Hedlund-Lyndon theorem implies that every bijective cellular automaton is reversible, i.e. its inverse is again a cellular automaton; see, for instance, \cite[Theorem 1.10.2]{CAG2} and the original symbolic-dynamical source \cite{Hedlund1969}.  This finite-alphabet phenomenon is one of the basic reasons why reversibility is well behaved in classical cellular automata theory.

For infinite alphabets the situation is different.  The inverse of a bijective cellular automaton may be a well-defined shift-equivariant map without having any finite memory.  The monograph \cite{CAG2} gives such examples over $\mathbb Z$ and records a family of open problems asking how this failure interacts with the algebraic structure of the acting group.  In particular, Open Problem 2 in the second edition asks whether, for a periodic group $G$ which is not locally finite and an infinite alphabet $A$, there exists a bijective cellular automaton
\[
        \tau\colon A^G\longrightarrow A^G
\]
which is not invertible in the cellular-automaton sense; see \cite[Open Problems, OP-2]{CAG2}.  The same source notes the two boundary cases known before: the answer is positive when $G$ contains an element of infinite order, while if $G$ is locally finite then every bijective cellular automaton over an arbitrary alphabet is invertible; see \cite[Comments on OP-2]{CAG2} and \cite[Corollary 1.2 and Proposition 4.1]{CC2011Reversibility}.

Thus the genuine gap was the Burnside-type situation: finitely generated infinite torsion subgroups, such as the first Grigorchuk group \cite{Grigorchuk1980}, contain no infinite cyclic direction along which one can simply shift information to infinity.  The point of this note is that an infinite-order element is not needed.  The right replacement is a variable-labelled ray in a Cayley graph, together with a rank track which makes all computations finite in each individual configuration but with no uniform bound on the length needed by the inverse.

Our main result is the following exact characterization.

\begin{theorem}[Main theorem]\label{thm:main}
For a group $G$, the following conditions are equivalent.
\begin{enumerate}[label=\textup{(\roman*)}]
    \item $G$ is locally finite.
    \item For every nonempty alphabet $A$, every bijective cellular automaton $A^G\to A^G$ is reversible.
    \item For every infinite alphabet $A$, every bijective cellular automaton $A^G\to A^G$ is reversible.
    \item For some infinite alphabet $A$, every bijective cellular automaton $A^G\to A^G$ is reversible.
\end{enumerate}
Equivalently, if $G$ is not locally finite, then for every infinite alphabet $A$ there exists a bijective cellular automaton $A^G\to A^G$ whose inverse is not a cellular automaton.
\end{theorem}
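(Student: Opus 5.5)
The cycle of implications splits into trivial steps, one cited result, and one construction. The implications (ii)$\Rightarrow$(iii)$\Rightarrow$(iv) are immediate. For (i)$\Rightarrow$(ii) I will invoke the known result recorded in the introduction: over a locally finite group every bijective cellular automaton over an arbitrary alphabet is reversible \cite[Corollary 1.2 and Proposition 4.1]{CC2011Reversibility}. The whole work is therefore in (iv)$\Rightarrow$(i), which I prove in the contrapositive form stated at the end of the theorem: if $G$ is not locally finite and $A$ is any infinite set, I construct a bijective cellular automaton $A^G\to A^G$ that is not reversible.

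\emph{Two reductions.} First, pick a finitely generated infinite subgroup $H\le G$. A cellular automaton on $A^H$ with memory set inside $H$ induces one on $A^G$ by acting coset by coset. Induction preserves bijectivity. It also preserves the property of having a cellular-automaton inverse, in both directions, via the standard induction/restriction correspondence for cellular automata over subgroups. So it suffices to work over $H$.

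Second, since $A$ is infinite, $|A|=|B\times C|$ with $B=\mathbb{N}\times(S\cup\{*\})\times\Ftwo$ countable and $C=A$. Under this identification $A^H\cong B^H\times C^H$. If $\tau$ is a bijective, non-reversible cellular automaton on $B^H$, then $\tau\times\mathrm{id}$ is bijective on $A^H$. It is also non-reversible: a cellular-automaton inverse of $\tau\times\mathrm{id}$, restricted to configurations with a fixed constant $C$-component, would give a local inverse for $\tau$.

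\emph{The construction on $B^H$.} Fix a finite symmetric generating set $S$ of $H$. A symbol $(r,s,b)\in B$ has a rank $r$, a direction $s\in S\cup\{*\}$ and a data bit $b$. Define $\tau$ with memory set $\{1\}\cup S$ as follows: ranks and directions are unchanged. At a site $g$ with $r(g)\ge1$, $s(g)=s\in S$ and $r(gs)=r(g)-1$, the new bit is $b(g)+b(gs)$. At all other sites the bit is unchanged.

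Call $g\mapsto gs(g)$ a valid link when the rank condition holds. Since ranks strictly decrease along valid links, every maximal chain starting at $g$ has length at most $r(g)$ and consists of distinct sites. It ends at a site whose bit is left unchanged. Chains may merge, but this causes no problem. Given an output $c$, the preimage bits are determined uniquely by recursion on rank:
\[
b(g)=c(g)+b(gs(g))
\]
on valid links, and $b(g)=c(g)$ otherwise. This recursion yields existence and uniqueness, so $\tau$ is bijective. Unwinding it shows that $\tau^{-1}(c)(g)$ is the sum of the bits $c$ along the chain from $g$.

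\emph{Non-reversibility.} Suppose $\tau^{-1}$ had a finite memory set $M$, and choose $n$ larger than the word length of every element of $M$. Since $H$ is infinite and finitely generated, its Cayley graph contains a geodesic path $1=g_0,g_1,\dots,g_n$ with $g_{i+1}=g_is_i$. Consider configurations with rank $n-i$ and direction $s_i$ at $g_i$, and rank $0$ elsewhere. The inverse bit at $1$ is $\sum_i c(g_i)$, so flipping $c(g_n)$ changes $\tau^{-1}(c)(1)$. But $g_n\notin M$, because $g_n$ has word length $n$, and the two configurations agree on $M$. This contradicts the locality of $\tau^{-1}$.

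I expect the main point needing care to be the well-foundedness of the rank recursion, i.e.\ that merging chains and arbitrary (non-geodesic, possibly cyclic-looking) direction patterns never create circular dependencies. The strict rank decrease is exactly what rules this out.
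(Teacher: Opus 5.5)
Your proposal is correct and follows the paper's proof essentially step for step: the same rank/direction/$\Ftwo$-data automaton with memory set $\{1_G\}\cup S$, the same finite-chain witness (flip the data bit at the far end of a chain that leaves the putative memory set), and the same $A\cong A\times B$ trick to pass to arbitrary infinite alphabets. The differences are cosmetic and all valid: you cite rather than reprove the locally finite direction, you take an unnecessary detour through induction from a finitely generated subgroup $H$, you solve the fibre equations by recursion on rank instead of writing the inverse as $\sum_{k\le r(g)}P_c^k$, and you use geodesic segments of length $n$ in place of a K\"onig ray.
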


The proof is constructive.  Starting with a finite set which generates an infinite subgroup of $G$, we choose a finite symmetric set $S$ and use the countable alphabet
\[
        B=\Nzero\times (S\sqcup\{\bot\})\times \Ftwo .
\]
The three components are interpreted as rank, direction and data.  At a site $g$, the automaton either leaves the data bit fixed or adds to it the data bit at a directed neighbour $gd(g)$, but only when the rank strictly drops by one.  The forward rule has memory $\{1_G\}\cup S$.  For each fixed control configuration, the data map is $I+P$, where $P$ is pointwise nilpotent: from a site of rank $n$, all paths have length at most $n$.  Hence $I+P$ is bijective with inverse $I+P+\cdots+P^n$ at that site.  However, along finite directed chains of arbitrarily large length, the inverse at the first site reads the parity of the whole chain.  No finite memory can do this uniformly.

This result strengthens the non-periodic examples of \cite{CC2011Reversibility}: instead of relying on an infinite-order element, it characterizes exactly those groups for which bijective cellular automata over infinite alphabets can fail to be reversible.  It is also parallel in spirit to the linear local-finiteness characterization of \cite{CC2011LinearLF}, but it concerns arbitrary alphabets and bijective, not merely linear, cellular automata.  It shows that Open Problem 2 has an affirmative answer in all periodic non-locally finite cases, and in fact in every non-locally finite group.

\section{Cellular automata and reversible cellular automata}

We fix the convention used throughout the paper.  An \emph{alphabet} is a nonempty set.  If $G$ is a group and $A$ is an alphabet, then $A^G$ is the set of configurations $x\colon G\to A$.  The group $G$ acts on $A^G$ by the left shift
\[
        (g x)(h)=x(g^{-1}h),
        \qquad g,h\in G.
\]

\begin{definition}
A map $\tau\colon A^G\to A^G$ is a \emph{cellular automaton} if there exist a finite subset $M\subset G$ and a map $\mu\colon A^M\to A$ such that
\[
        \tau(x)(g)=\mu\bigl((g^{-1}x)|_M\bigr)
\]
for all $x\in A^G$ and $g\in G$.  Equivalently, $\tau(x)(g)$ depends only on the restriction of $x$ to $gM$.  Such a set $M$ is called a \emph{memory set} for $\tau$.
\end{definition}

A bijective cellular automaton $\tau\colon A^G\to A^G$ is called \emph{reversible} if its inverse map $\tau^{-1}\colon A^G\to A^G$ is also a cellular automaton.  We write $\CA(G;A)$ for the monoid of cellular automata $A^G\to A^G$.

A group $G$ is \emph{locally finite} if every finitely generated subgroup of $G$ is finite.  The following graph-theoretic fact will be used in the construction.

\begin{lemma}[Rays in infinite finitely generated groups]\label{lem:ray}
Let $H$ be an infinite group generated by a finite symmetric set $S=S^{-1}$ with $1_H\notin S$.  Then there exist pairwise distinct elements
\[
        h_0=1_H,h_1,h_2,\ldots
\]
and elements $s_1,s_2,\ldots\in S$ such that
\[
        h_i=h_{i-1}s_i
        \qquad (i\ge 1).
\]
\end{lemma}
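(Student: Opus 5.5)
This is the standard statement that an infinite, connected, locally finite graph contains a geodesic ray. I would prove it with König's lemma applied to the right Cayley graph $\Gamma=\mathrm{Cay}(H,S)$. Its vertices are the elements of $H$, and $h$ is joined to $hs$ for each $s\in S$. Since $S=S^{-1}$ the edge relation is symmetric, and since $1_H\notin S$ there are no loops. Because $S$ is finite, every vertex has degree at most $|S|$. Because $S$ generates $H$, the graph is connected. Let $|h|$ denote the word length of $h$ with respect to $S$, which is the graph distance from $1_H$. For each $n$ the ball $\{h:|h|\le n\}$ has at most $1+|S|+\cdots+|S|^n$ elements, so it is finite. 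Since $H$ is infinite, for every $n$ there is an element with $|h|=n$ exactly: take any $h$ outside the ball of radius $n-1$ and truncate a geodesic word for it.

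Next I would build the tree of geodesic words. A geodesic word of length $n$ is a sequence $(s_1,\dots,s_n)\in S^n$ whose partial products $p_i=s_1\cdots s_i$ satisfy $|p_i|=i$ for all $i\le n$. Every prefix of a geodesic word is again geodesic. The word length $|p_i|$ can only change by $1$ per step and is at most $i$; conversely, any shortest word for an element of length $n$ has all prefixes geodesic. So the geodesic words form a rooted tree under the prefix order. Each node has at most $|S|$ children, and by the previous paragraph the tree has nodes at every depth $n$, so it is infinite.

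König's lemma then gives an infinite branch $(s_1,s_2,\dots)$. Set $h_0=1_H$ and $h_i=h_{i-1}s_i$. Then $|h_i|=i$, so the $h_i$ are pairwise distinct, and $h_i=h_{i-1}s_i$ holds by construction, with $s_i\in S$.

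The only point that needs any care is infinitude at every level, which uses both that $H$ is infinite and that balls are finite. If one prefers to avoid König's lemma, the same branch can be built by a compactness or diagonal argument: among the geodesic words of lengths $n\to\infty$, repeatedly choose a first letter that occurs infinitely often, and continue letter by letter.
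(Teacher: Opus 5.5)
Your proof is correct and takes essentially the same route as the paper: both apply König's lemma (equivalently, diagonal extraction) to finite paths of unbounded length from $1_H$ in the locally finite Cayley graph. The only difference is that you restrict to geodesic words rather than arbitrary simple paths, which makes pairwise distinctness immediate from $|h_i|=i$ and yields a geodesic ray.
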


\begin{proof}
The Cayley graph $\Gamma=\operatorname{Cay}(H,S)$ is connected, infinite and locally finite.  For each $n$ choose a vertex outside the ball of radius $n$ about $1_H$ and take a simple path from $1_H$ to that vertex.  Since only finitely many edges leave each finite ball, a standard diagonal extraction, equivalently König's infinity lemma, gives an infinite simple path starting at $1_H$.  Reading the edge labels along this path gives the required sequence.
\end{proof}

\section{The locally finite case}

We first record the easy half of the characterization.  This result is known, and appears in the comments to OP-2 in \cite{CAG2}; we include the proof because it is short and fixes the coset convention used later.

\begin{proposition}\label{prop:lf}
If $G$ is locally finite, then for every alphabet $A$ every bijective cellular automaton $\tau\colon A^G\to A^G$ is reversible.
\end{proposition}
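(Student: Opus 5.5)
Let $M$ be a memory set for $\tau$ with local rule $\mu$, and let $H=\langle M\rangle$ be the subgroup generated by $M$. Since $G$ is locally finite, $H$ is finite. The plan is to show that $\tau^{-1}$ is a cellular automaton with memory set $H$. The underlying idea is that $\tau$ splits as a product of independent maps, one on each left coset of $H$, and each of these is essentially a single finite-dimensional bijection.

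\emph{Step 1: restriction to cosets.} For $g\in G$ we have $gM\subset gH$, so $\tau(x)(g)$ depends only on $x|_{gH}$. Consequently, for each left coset $C=gH$ there is a well-defined map $\tau_C\colon A^C\to A^C$ with $\tau(x)|_C=\tau_C(x|_C)$. Since $A^G=\prod_{C\in G/H}A^C$, $\tau$ is the product map $\prod_C\tau_C$.

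\emph{Step 2: bijectivity on each coset.} A product of maps between nonempty sets is bijective if and only if every factor is bijective; this uses that $A$ is nonempty. For surjectivity of $\tau_C$, take $y_C\in A^C$ and extend it arbitrarily to $y\in A^G$. The preimage $\tau^{-1}(y)$ restricted to $C$ maps to $y_C$. For injectivity, suppose $\tau_C(u)=\tau_C(v)$. Fix any $z\in A^{G\setminus C}$ and extend both $u$ and $v$ by $z$. Their images under $\tau$ then agree, so $u=v$. Hence $\tau_C$ is bijective, and $\tau^{-1}=\prod_C\tau_C^{-1}$.

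\emph{Step 3: shift-compatibility and the local rule.} We identify $A^{gH}$ with $A^H$ via $u\mapsto (h\mapsto u(gh))$. Equivariance of $\tau$ shows that under this identification $\tau_{gH}$ becomes $\tau_H$, independently of the representative $g$. Define $\nu\colon A^H\to A$ by $\nu(w)=\tau_H^{-1}(w)(1_H)$. Now write $x=\tau^{-1}(y)$. Then $x|_{gH}=\tau_{gH}^{-1}(y|_{gH})$, and $(g^{-1}x)|_H=\tau_H^{-1}\bigl((g^{-1}y)|_H\bigr)$. Evaluating at $1_H$ gives $\tau^{-1}(y)(g)=\nu\bigl((g^{-1}y)|_H\bigr)$, so $\tau^{-1}$ is a cellular automaton with memory set $H$.

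The only point needing care is the bookkeeping in Step 3. We must check that the identifications of $A^{gH}$ with $A^H$ are consistent with the left-shift convention $(gx)(h)=x(g^{-1}h)$ and do not depend on the choice of coset representative. Both follow by a direct computation from $\tau(g^{-1}x)=g^{-1}\tau(x)$. Finiteness of $H$ is used exactly once: to make $H$ an admissible finite memory set.
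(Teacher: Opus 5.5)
Your proposal is correct and follows essentially the same route as the paper: decompose $\tau$ over the left cosets of the finite subgroup $H=\langle M\rangle$, deduce that the coset map $\tau_H$ is bijective, and apply $\tau_H^{-1}$ on each coset to obtain an inverse with memory set $H$. Your version additionally spells out details the paper leaves implicit, namely the explicit local rule $\nu(w)=\tau_H^{-1}(w)(1_H)$ and the check via $\tau(g^{-1}x)=g^{-1}\tau(x)$ that $\tau^{-1}(y)(g)=\nu\bigl((g^{-1}y)|_H\bigr)$.
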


\begin{proof}
Let $M$ be a finite memory set for $\tau$, and put $H=\langle M\rangle$.  Since $G$ is locally finite, $H$ is finite.  For every $g\in G$ we have $gM\subset gH$, so the restriction of $\tau(x)$ to a left coset $gH$ depends only on the restriction of $x$ to the same left coset.

Thus $\tau$ is the product, over the left cosets of $H$ in $G$, of one finite-block map
\[
        \tau_H\colon A^H\longrightarrow A^H .
\]
More explicitly, identifying $A^{gH}$ with $A^H$ by left multiplication by $g^{-1}$, the action of $\tau$ on every coset is the same map $\tau_H$.  If $\tau_H$ were not injective, then changing one coset would make $\tau$ non-injective.  If $\tau_H$ were not surjective, then prescribing an unattainable pattern on one coset would make $\tau$ non-surjective.  Hence the global bijectivity of $\tau$ implies that $\tau_H$ is bijective.

Since $H$ is finite, the inverse map $\tau_H^{-1}\colon A^H\to A^H$ is implemented by a cellular automaton over the finite group $H$ with memory set $H$.  Applying $\tau_H^{-1}$ independently on each left coset $gH$ gives a cellular automaton $\sigma\colon A^G\to A^G$ with memory set $H$.  By construction, $\sigma=\tau^{-1}$.  Thus $\tau$ is reversible.
\end{proof}

\section{A non-reversible bijective automaton over a countable alphabet}

We now prove the converse direction.  Throughout this section assume that $G$ is not locally finite.  Then there is a finite subset $F\subset G$ such that $\langle F\rangle$ is infinite.  Set
\[
        S=(F\cup F^{-1})\setminus\{1_G\}.
\]
After discarding repetitions, $S$ is finite, symmetric and generates an infinite subgroup.  Put
\[
        D=S\sqcup\{\bot\},
        \qquad
        B=\Nzero\times D\times\Ftwo.
\]
For $x\in B^G$ and $g\in G$ write
\[
        x(g)=\bigl(r_x(g),d_x(g),u_x(g)\bigr),
\]
where $r_x(g)\in\Nzero$, $d_x(g)\in D$, and $u_x(g)\in\Ftwo$.  We call $r_x$ and $d_x$ the control tracks and $u_x$ the data track.

A site $g\in G$ is called \emph{active for $x$} if
\[
        d_x(g)\in S,
        \qquad r_x(g)>0,
        \qquad r_x\bigl(gd_x(g)\bigr)=r_x(g)-1.
\]
Define $T\colon B^G\to B^G$ by
\[
T(x)(g)=
\begin{cases}
\bigl(r_x(g),d_x(g),u_x(g)+u_x(gd_x(g))\bigr),
   & \text{if }g\text{ is active for }x,\\[1mm]
\bigl(r_x(g),d_x(g),u_x(g)\bigr),
   & \text{otherwise.}
\end{cases}
\tag{4.1}\label{eq:Tdef}
\]
The addition is in $\Ftwo$.

\begin{lemma}\label{lem:TCA}
The map $T$ is a cellular automaton with memory set $\{1_G\}\cup S$.
\end{lemma}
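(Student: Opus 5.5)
The plan is to exhibit an explicit local rule $\mu\colon B^M\to B$ with $M=\{1_G\}\cup S$ and then check the defining identity $T(x)(g)=\mu\bigl((g^{-1}x)|_M\bigr)$ for all $x\in B^G$ and $g\in G$. Everything rests on the fact that the case distinction in \eqref{eq:Tdef} only looks at $x$ on $gM$. Indeed, if $d_x(g)=\bot$ the site is inactive and only $x(g)$ is used. If $d_x(g)=s\in S$, then $gd_x(g)=gs\in gS\subset gM$, so both the activity test $r_x(gs)=r_x(g)-1$ and the added bit $u_x(gs)$ are read inside $gM$.

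Define $\mu$ as follows. For a pattern $p\in B^M$, write $p(h)=(r_p(h),d_p(h),u_p(h))$. Put
\[
\mu(p)=\bigl(r_p(1_G),d_p(1_G),u_p(1_G)+u_p(d_p(1_G))\bigr)
\]
if $d_p(1_G)\in S$, $r_p(1_G)>0$ and $r_p(d_p(1_G))=r_p(1_G)-1$. Otherwise put $\mu(p)=p(1_G)$. This is well defined, because when $d_p(1_G)\in S$ the element $d_p(1_G)$ lies in $M$, so $p(d_p(1_G))$ makes sense.

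Next, unwind the shift convention. For $h\in M$ we have $(g^{-1}x)(h)=x(gh)$, so $p:=(g^{-1}x)|_M$ satisfies $p(1_G)=x(g)$ and $p(s)=x(gs)$ for $s\in S$. Substituting gives $d_p(1_G)=d_x(g)$ and $r_p(1_G)=r_x(g)$. When $d_x(g)=s\in S$, it also gives $r_p(s)=r_x(gs)$ and $u_p(s)=u_x(gs)$. So $p$ satisfies the activity condition for $\mu$ exactly when $g$ is active for $x$. In both cases the output of $\mu(p)$ then coincides term by term with \eqref{eq:Tdef}.

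The only point needing care is the convention check: with the left shift $(gx)(h)=x(g^{-1}h)$, the window $(g^{-1}x)|_M$ reads $x$ on $gM$, matching the right-multiplication $gd_x(g)$ in the definition. The remaining steps are a routine case check, so I expect no real obstacle.
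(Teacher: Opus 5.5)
Your proposal is correct and follows the same approach as the paper, which also observes that computing $T(x)(g)$ requires only $x(g)$ and, when $d_x(g)=s\in S$, the value $x(gs)$, both lying in $g(\{1_G\}\cup S)$. You additionally write out explicitly the local rule $\mu$ and the shift-convention check $(g^{-1}x)(h)=x(gh)$, which the paper leaves as ``the evident rule induced by \eqref{eq:Tdef}''.
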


\begin{proof}
To compute $T(x)(g)$ one needs $x(g)$ and, if $d_x(g)=s\in S$, the value $x(gs)$.  Thus all required coordinates lie in $g(\{1_G\}\cup S)$.  The local rule is the evident rule induced by \eqref{eq:Tdef}; hence $T$ is a cellular automaton.
\end{proof}

The key point is that $T$ is globally bijective on the full shift $B^G$, without imposing any admissibility condition on the control tracks.

\begin{lemma}\label{lem:Tbijective}
The cellular automaton $T\colon B^G\to B^G$ is bijective.
\end{lemma}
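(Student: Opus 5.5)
The plan is to use that $T$ never changes the control tracks, so bijectivity reduces to bijectivity on the data track for each fixed pair of control configurations. Fix $r\colon G\to\Nzero$ and $d\colon G\to D$, and consider the fibre of configurations with these control tracks. The set of active sites depends only on $(r,d)$, and on this fibre $T$ acts on the data track $u\in\Ftwo^G$ as the linear map
\[
        (I+P)u(g)=u(g)+\varepsilon(g)\,u\bigl(gd(g)\bigr),
\]
where $\varepsilon(g)=1$ if $g$ is active and $0$ otherwise. Since $T$ preserves each fibre, it is bijective on $B^G$ if and only if $I+P$ is bijective on $\Ftwo^G$ for every choice of $(r,d)$.

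Next I use the rank track to show that $P$ is pointwise nilpotent. Define the successor of an active site $g$ to be $\phi(g)=gd(g)$. By definition of activity, $r(\phi(g))=r(g)-1$. Hence any chain $g,\phi(g),\phi^2(g),\dots$ of active sites has strictly decreasing ranks, so it reaches a non-active site after at most $r(g)$ steps. Write $\ell(g)$ for the number of active sites on the chain starting at $g$, so that $\ell(g)\le r(g)$. Then $(P^k u)(g)=u(\phi^k(g))$ if $g,\dots,\phi^{k-1}(g)$ are all active, and $(P^k u)(g)=0$ otherwise. In particular $(P^k u)(g)=0$ for $k>\ell(g)$.

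Injectivity and surjectivity then follow as for a unipotent operator. For injectivity, suppose $(I+P)u=0$. I argue by induction on $\ell(g)$ that $u(g)=0$. If $\ell(g)=0$, then $u(g)=((I+P)u)(g)=0$. If $\ell(g)>0$, then $u(g)=u(\phi(g))$, and $\ell(\phi(g))=\ell(g)-1$, so $u(g)=0$ by induction. For surjectivity, given $v$ I define
\[
        u(g)=\sum_{k=0}^{\ell(g)}v\bigl(\phi^k(g)\bigr),
\]
which is a finite sum. I then check that $u(g)+\varepsilon(g)\,u(\phi(g))=v(g)$: when $g$ is active, the sum defining $u(\phi(g))$ is exactly the tail of the sum defining $u(g)$. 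This $u$ is the formal inverse $\sum_k P^k$ evaluated pointwise.

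The only real subtlety is that there is no uniform bound on $\ell(g)$, so $\sum_k P^k$ is not a finite polynomial in $P$. The pointwise finiteness $\ell(g)\le r(g)$, guaranteed by the rank condition in the definition of activity, is exactly what replaces nilpotency. I must also make sure the argument needs no admissibility assumption on $(r,d)$. Arbitrary directions, cycles in $G$, and many sites pointing at the same site cause no problem, because each site has at most one successor and ranks strictly decrease along successors. The decreasing ranks rule out cycles among active sites, and the induction runs on $\ell$ rather than on any global structure.
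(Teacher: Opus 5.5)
Your proof is correct and takes essentially the same route as the paper: you reduce to control fibres, write the data map as $I+P$ with $P$ pointwise nilpotent because of the rank track, and invert it by the pointwise-finite series $\sum_k P^k$. The differences are cosmetic: you truncate the series at the exact active-chain length $\ell(g)$ rather than at $r(g)$, and you prove injectivity by induction on $\ell(g)$ instead of checking the left-inverse identity $R_c(I+P_c)=I$.
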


\begin{proof}
Fix a control configuration
\[
        c=(r,d)\in (\Nzero\times D)^G.
\]
On the fibre with this control, the data part of $T$ has the form
\[
        T_c=I+P_c
\]
on $\Ftwo^G$, where
\[
(P_cu)(g)=
\begin{cases}
        u(gd(g)), & \text{if }g\text{ is active for }c,\\
        0,        & \text{otherwise.}
\end{cases}
\]
Here ``active for $c$'' means that the three defining conditions above hold with $r,d$ in place of $r_x,d_x$.

If $g=g_0\to g_1\to\cdots\to g_k$ is an active path for $c$, then the rank drops by one at each step:
\[
        r(g_i)=r(g_0)-i.
\]
Consequently
\[
        (P_c^k u)(g)=0
        \qquad\text{whenever }k>r(g).
\tag{4.2}\label{eq:pointwise-nilpotent}
\]
Thus the formula
\[
        (R_cv)(g)=\sum_{k=0}^{r(g)}(P_c^k v)(g)
\tag{4.3}\label{eq:Rc}
\]
defines a map $R_c\colon\Ftwo^G\to\Ftwo^G$; every coordinate sum is finite.

We claim that $R_c$ is the inverse of $I+P_c$.  First, using \eqref{eq:pointwise-nilpotent}, for every $g\in G$ we get
\[
        \bigl(R_c(I+P_c)v\bigr)(g)
        =\sum_{k=0}^{r(g)}\bigl(P_c^k(I+P_c)v\bigr)(g)
        =v(g)+(P_c^{r(g)+1}v)(g)
        =v(g).
\]
Hence $R_c(I+P_c)=I$.

Conversely, if $g$ is inactive for $c$, then $(P_c w)(g)=0$ for all $w$, and \eqref{eq:Rc} gives $(R_cv)(g)=v(g)$.  Hence $((I+P_c)R_cv)(g)=v(g)$.  If $g$ is active and $h=gd(g)$, then $r(h)=r(g)-1$, and
\[
        (P_c^k v)(h)=(P_c^{k+1}v)(g)
        \qquad(0\le k\le r(h)).
\]
Therefore
\[
\begin{aligned}
((I+P_c)R_cv)(g)
 &= (R_cv)(g)+(R_cv)(h) \\
 &= \sum_{k=0}^{r(g)}(P_c^k v)(g)
    +\sum_{k=0}^{r(g)-1}(P_c^k v)(h) \\
 &= \sum_{k=0}^{r(g)}(P_c^k v)(g)
    +\sum_{k=1}^{r(g)}(P_c^k v)(g) \\
 &=v(g),
\end{aligned}
\]
because the data field is $\Ftwo$.  Thus $(I+P_c)R_c=I$.

Hence each fibre map $T_c$ is bijective, and the control tracks are fixed by $T$.  Therefore $T$ is bijective on $B^G$.
\end{proof}

\begin{lemma}\label{lem:notreversiblecountable}
The inverse map $T^{-1}\colon B^G\to B^G$ is not a cellular automaton.
\end{lemma}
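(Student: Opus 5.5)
We argue by contradiction. Suppose $T^{-1}$ were a cellular automaton with some finite memory set $M\subset G$. Then $T^{-1}(y)(1_G)$ depends only on $y|_M$. The plan is to build, for a suitable length $n$, two configurations $y,y'\in B^G$ that agree on $M$ but whose preimages differ at $1_G$ in the data bit.

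We use Lemma~\ref{lem:ray}, applied to $H=\langle S\rangle$, which is infinite and generated by the finite symmetric set $S$ with $1_G\notin S$. It gives pairwise distinct $h_0=1_G,h_1,h_2,\ldots$ with $h_i=h_{i-1}s_i$ and $s_i\in S$. Since $M$ is finite and the $h_i$ are distinct, we can choose $n\ge 1$ with $h_n\notin M$.

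We now define the control $c=(r,d)$ as follows:
\begin{itemize}
\item for $0\le i<n$, set $r(h_i)=n-i$ and $d(h_i)=s_{i+1}$;
\item set $r(h_n)=0$ and $d(h_n)=\bot$;
\item at every other site $g$, set $r(g)=0$ and $d(g)=\bot$.
\end{itemize}
Then each $h_i$ with $i<n$ is active, since $h_is_{i+1}=h_{i+1}$ has rank $n-i-1$. Every other site is inactive, because its direction is $\bot$ or its rank is $0$. So the active graph is exactly the chain $h_0\to h_1\to\cdots\to h_n$.

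Let $y$ have control $c$ and data $v=0$, and let $y'$ have control $c$ and data $v'=\delta_{h_n}$. These agree outside $h_n$, hence on $M$.

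By Lemma~\ref{lem:Tbijective}, $T^{-1}$ acts on the data fibre by $R_c$. Formula \eqref{eq:Rc} then gives
\[
(R_cv')(1_G)=\sum_{k=0}^{n}(P_c^kv')(h_0)=v'(h_n)=1,
\]
because $(P_c^kv')(h_0)=v'(h_k)$ along the chain. On the other hand, $(R_cv)(1_G)=0$. So $T^{-1}(y)(1_G)\ne T^{-1}(y')(1_G)$ although $y|_M=y'|_M$, which contradicts $M$ being a memory set.

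The main care needed is in the two verifications. First, that $(P_c^k v')(h_0)=v'(h_k)$ for $k\le n$; this follows by induction, since each $h_i$ with $i<n$ is active and points to $h_{i+1}$. Second, that the chosen control makes no off-chain site active. This is guaranteed by setting $d=\bot$ everywhere off the chain, and it requires that the $h_i$ are distinct so that the assignment is well defined.

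The only genuinely nontrivial input is the existence of arbitrarily long injective directed paths leaving any finite set, which Lemma~\ref{lem:ray} supplies. This is exactly why the argument needs the non-local-finiteness of $G$.
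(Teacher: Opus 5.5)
Your proof is correct and follows essentially the same route as the paper's: the same finite chain control built from the ray of Lemma~\ref{lem:ray}, and the same pair of data configurations $0$ and $\delta_{h_n}$ that differ only at a site outside the putative memory set. The parity computation from \eqref{eq:Rc} at $1_G$ is also the same. Your explicit checks that $(P_c^kv')(h_0)=v'(h_k)$ and that no off-chain site is active simply spell out details the paper leaves implicit.
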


\begin{proof}
By Lemma \ref{lem:ray}, applied to the infinite subgroup generated by $S$, there exist pairwise distinct elements
\[
        h_0=1_G,h_1,h_2,\ldots
\]
and elements $s_1,s_2,\ldots\in S$ such that $h_i=h_{i-1}s_i$ for all $i\ge1$.

Assume, toward a contradiction, that $T^{-1}$ has a finite memory set $\Omega\subset G$.  Choose $N\ge1$ such that
\[
        h_N\notin\Omega.
\]
Define a control configuration $c_N=(r_N,d_N)\in(\Nzero\times D)^G$ as follows:
\[
        r_N(h_i)=N-i,
        \qquad
        d_N(h_i)=s_{i+1}
        \qquad(0\le i<N),
\]
while
\[
        r_N(h_N)=0,
        \qquad
        d_N(h_N)=\bot.
\]
At all other sites $g$ put $r_N(g)=0$ and $d_N(g)=\bot$.  Then the active part of the control graph contains exactly the finite chain
\[
        h_0\longrightarrow h_1\longrightarrow\cdots\longrightarrow h_N .
\tag{4.4}\label{eq:finite-chain}
\]

Now define two output data configurations $v^{(0)},v^{(1)}\in\Ftwo^G$ by
\[
        v^{(0)}\equiv0,
        \qquad
        v^{(1)}(g)=
        \begin{cases}
        1, & g=h_N,\\
        0, & g\ne h_N.
        \end{cases}
\]
Let
\[
        y^{(j)}=(c_N,v^{(j)})\in B^G,
        \qquad j=0,1.
\]
Since the two configurations have the same control tracks and differ only in the data bit at $h_N\notin\Omega$, we have
\[
        y^{(0)}|_{\Omega}=y^{(1)}|_{\Omega}.
\tag{4.5}\label{eq:same-memory}
\]

On the other hand, the inverse formula \eqref{eq:Rc} gives, at the initial vertex $h_0=1_G$ of the chain,
\[
        T^{-1}(c_N,v)(h_0)_{\mathrm{data}}
        =\sum_{i=0}^{N}v(h_i).
\tag{4.6}\label{eq:parity-chain}
\]
Therefore
\[
        T^{-1}(y^{(0)})(1_G)_{\mathrm{data}}=0,
        \qquad
        T^{-1}(y^{(1)})(1_G)_{\mathrm{data}}=1.
\]
This contradicts \eqref{eq:same-memory}, because a cellular automaton with memory set $\Omega$ would have the same value at $1_G$ on any two configurations agreeing on $\Omega$.  Hence $T^{-1}$ is not a cellular automaton.
\end{proof}

Combining the last three lemmas gives the countable-alphabet counterexample.

\begin{proposition}\label{prop:countable-counterexample}
If $G$ is not locally finite, then there exists a countably infinite alphabet $B$ and a bijective cellular automaton $T\colon B^G\to B^G$ whose inverse is not a cellular automaton.
\end{proposition}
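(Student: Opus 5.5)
The proof is an assembly of the three lemmas just established, applied with the alphabet $B=\Nzero\times D\times\Ftwo$ of this section. First I will fix the data: since $G$ is not locally finite, there is a finite $F\subset G$ with $\langle F\rangle$ infinite, and $S=(F\cup F^{-1})\setminus\{1_G\}$ is finite, symmetric, and generates the same infinite subgroup. The only degenerate case is $F\subset\{1_G\}$, but then $\langle F\rangle$ is trivial, so it cannot occur and $S\neq\emptyset$. The alphabet $B$ is countably infinite, because $\Nzero$ is countably infinite while $D=S\sqcup\{\bot\}$ and $\Ftwo$ are finite and nonempty.

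Next I will take $T$ as in \eqref{eq:Tdef}. By Lemma \ref{lem:TCA}, $T$ is a cellular automaton with memory set $\{1_G\}\cup S$. By Lemma \ref{lem:Tbijective}, it is bijective on all of $B^G$, with inverse given fibrewise by $R_c$ in \eqref{eq:Rc}. By Lemma \ref{lem:notreversiblecountable}, the inverse $T^{-1}$ admits no finite memory set. These three facts together are exactly the statement of the proposition.

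The only step that needs any care is the applicability of Lemma \ref{lem:ray} inside Lemma \ref{lem:notreversiblecountable}. That lemma is stated for an abstract group $H$ generated by $S$, and here it is applied to $H=\langle S\rangle\le G$. The hypotheses $S=S^{-1}$, $1\notin S$, $S$ finite and $H$ infinite all hold by the construction above. The resulting elements $h_i$ lie in $G$, so the chain \eqref{eq:finite-chain} lives in $G$, and this is what the contradiction requires. I do not expect any genuine obstacle, since the proposition simply records the conjunction of the preceding lemmas.
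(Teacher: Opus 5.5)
Your proposal is correct and takes the same approach as the paper, which obtains the proposition simply by combining Lemmas \ref{lem:TCA}, \ref{lem:Tbijective} and \ref{lem:notreversiblecountable} for the alphabet $B=\Nzero\times D\times\Ftwo$. Your extra checks are correct details the paper leaves implicit: that $S\neq\emptyset$, that $B$ is countably infinite, and that Lemma \ref{lem:ray} applies to $H=\langle S\rangle\le G$.
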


\section{Passage to arbitrary infinite alphabets}

The countable alphabet in Proposition \ref{prop:countable-counterexample} is enough to obtain counterexamples over every infinite alphabet.

\begin{lemma}\label{lem:enlargealphabet}
Let $B$ be a countable alphabet and suppose that $T\colon B^G\to B^G$ is a bijective cellular automaton which is not reversible.  Then, for every infinite alphabet $A$, there exists a bijective cellular automaton $\tau_A\colon A^G\to A^G$ which is not reversible.
\end{lemma}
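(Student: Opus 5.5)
The idea is to embed $B$ into $A$ and let the new automaton act as $T$ on configurations with values in the image of $B$ and as the identity elsewhere. Since $A$ is infinite and $B$ is countable, there is an injection $\iota\colon B\to A$. Put $A_0=\iota(B)$ and $A_1=A\setminus A_0$. A uniform identity extension would not be well-defined sitewise, because the neighbourhood of a site may contain some symbols in $A_0$ and some in $A_1$. So I will use a one-site-by-one-site rule instead. Let $M$ be a memory set for $T$ containing $1_G$, and let $\mu\colon B^M\to B$ be its local rule. For $p\in A^M$, define $\nu(p)=\iota\bigl(\mu(\iota^{-1}\circ p)\bigr)$ if $p(m)\in A_0$ for all $m\in M$, and $\nu(p)=p(1_G)$ otherwise. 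Let $\tau_A$ be the cellular automaton with memory $M$ and local rule $\nu$.

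The difficulty with this sitewise rule is that bijectivity of $\tau_A$ is no longer inherited directly. On configurations mixing $A_0$ and $A_1$, $\tau_A$ applies $T$'s rule only at sites whose whole neighbourhood lies in $A_0$. For a general $T$ this is not obviously bijective. I therefore expect bijectivity to be the main obstacle, and I would avoid it by a cleaner choice. Since $A$ is infinite, $|A|=|A\times B|$, because $|B|\le\aleph_0\le|A|$. Fix a bijection $\beta\colon A\to A'\times B$ for some nonempty set $A'$; we can take $A'=A$. Then $A^G\cong A'^G\times B^G$ coordinatewise, and I define
\[
\tau_A=\beta^{-1}\circ(\mathrm{id}_{A'^G}\times T)\circ\beta,
\]
where $\beta$ acts sitewise. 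This is a cellular automaton with the memory set $M$ of $T$: the $A'$-component at $g$ is copied, and the $B$-component is $\mu$ applied to the $B$-components on $gM$. It is bijective, with inverse $\beta^{-1}\circ(\mathrm{id}\times T^{-1})\circ\beta$.

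For non-reversibility, suppose $\tau_A^{-1}$ were a cellular automaton with finite memory $\Omega$ and local rule $\lambda$. Fix $a'\in A'$. For $y\in B^G$, the configuration $\tau_A^{-1}\beta^{-1}(a',y)$, where $a'$ means the constant configuration, equals $\beta^{-1}(a',T^{-1}y)$. Hence $T^{-1}(y)(g)$ is the $B$-component of $\beta(\lambda(\cdots))$ evaluated on $\beta^{-1}(a',y)$ restricted to $g\Omega$. This depends only on $y|_{g\Omega}$ and is shift-equivariant, so $T^{-1}$ would be a cellular automaton with memory $\Omega$. This contradicts the assumption on $T$. The only care needed is checking that the sitewise conjugation commutes with the shift and preserves memory sets, which is immediate because $\beta$ acts coordinatewise.
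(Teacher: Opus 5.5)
Your proof is correct and follows essentially the paper's argument: a bijection $A\to A\times B$, the product map $(a,b)\mapsto(a,T(b))$ conjugated back to $A^G$, and non-reversibility obtained by fixing the $A$-track to a constant configuration and projecting onto the $B$-track. The sitewise-embedding idea you discard in your first paragraph is not needed, and your assumption $1_G\in M$ is harmless, since a memory set can always be enlarged.
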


\begin{proof}
Since $A$ is infinite and $B$ is countable, ZFC gives a bijection
\[
        \theta\colon A\longrightarrow A\times B.
\]
The induced one-block conjugacy
\[
        \Theta\colon A^G\longrightarrow (A\times B)^G
\]
is a reversible cellular automaton.  On $(A\times B)^G\cong A^G\times B^G$ define
\[
        \widehat T(a,b)=(a,T(b)).
\]
This is a bijective cellular automaton.  Set
\[
        \tau_A=\Theta^{-1}\circ \widehat T\circ \Theta.
\]
Then $\tau_A$ is a bijective cellular automaton on $A^G$.

It remains to show that $\widehat T$, and hence $\tau_A$, is not reversible.  If $\widehat T^{-1}$ had a finite memory set $\Omega$, then the following would give a finite memory rule for $T^{-1}$.  Fix any $a_0\in A$.  For $b,b'\in B^G$ with $b|_{\Omega}=b'|_{\Omega}$, the configurations $(\bar a_0,b)$ and $(\bar a_0,b')$ in $(A\times B)^G$, where $\bar a_0$ is the constant $A$-configuration, agree on $\Omega$.  Hence their images under $\widehat T^{-1}$ agree at $1_G$.  Projecting to the $B$-track gives
\[
        T^{-1}(b)(1_G)=T^{-1}(b')(1_G).
\]
Thus $T^{-1}$ would have memory set $\Omega$, a contradiction.  Therefore $\widehat T$ and $\tau_A$ are not reversible.
\end{proof}

\section{Proof of the main theorem and consequences}

\begin{proof}[Proof of Theorem \ref{thm:main}]
If $G$ is locally finite, Proposition \ref{prop:lf} gives condition \textup{(ii)}.  Clearly \textup{(ii)} implies \textup{(iii)}, and \textup{(iii)} implies \textup{(iv)}.

Conversely, suppose that $G$ is not locally finite.  Proposition \ref{prop:countable-counterexample} gives a countable infinite alphabet $B$ and a bijective non-reversible cellular automaton on $B^G$.  Lemma \ref{lem:enlargealphabet} then gives such a counterexample over every infinite alphabet $A$.  Hence condition \textup{(iv)} fails.  Therefore \textup{(iv)} implies local finiteness, and all conditions are equivalent.
\end{proof}

The announced answer to OP-2 is now immediate.

\begin{corollary}[Answer to OP-2]\label{cor:OP2}
Let $G$ be a periodic group which is not locally finite, and let $A$ be an infinite set.  Then there exists a bijective cellular automaton
\[
        \tau\colon A^G\longrightarrow A^G
\]
which is not reversible.  In fact, the same conclusion holds for every non-locally finite group $G$, without assuming periodicity.
\end{corollary}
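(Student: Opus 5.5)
The corollary is a direct specialization of Theorem \ref{thm:main}, so the plan is simply to feed the hypotheses into the negative direction of that theorem. Let $G$ be periodic and not locally finite, and let $A$ be an infinite set. Periodicity is not used anywhere: only the failure of local finiteness matters. Since $G$ is not locally finite, condition \textup{(i)} of Theorem \ref{thm:main} fails. By the equivalence \textup{(i)}$\Leftrightarrow$\textup{(iii)}, condition \textup{(iii)} fails as well. That is not quite enough on its own, because the negation of \textup{(iii)} only gives \emph{some} infinite alphabet carrying a non-reversible bijective automaton, not the prescribed $A$.

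To reach the given $A$, I will instead invoke the construction route directly. Proposition \ref{prop:countable-counterexample} supplies a countable alphabet $B=\Nzero\times D\times\Ftwo$ and the bijective, non-reversible automaton $T$. Lemma \ref{lem:enlargealphabet}, applied with this $B$ and the given infinite $A$, then produces the required bijective non-reversible $\tau_A\colon A^G\to A^G$. Equivalently, one can cite the final sentence of Theorem \ref{thm:main}, which is already stated for every infinite alphabet.

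For the second sentence of the corollary, I will observe that the argument above never used periodicity. Proposition \ref{prop:countable-counterexample} and Lemma \ref{lem:enlargealphabet} require only that $G$ is not locally finite. So the same two citations give the conclusion for an arbitrary non-locally finite group.

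There is no real obstacle here. The only point needing care is the one noted above: apply the statement "for every infinite alphabet $A$" rather than the weaker negation of \textup{(iii)}, so that the counterexample lives on the prescribed alphabet $A$.
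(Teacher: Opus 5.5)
Your proposal is correct and matches the paper, which treats the corollary as immediate from Theorem \ref{thm:main} (equivalently, Proposition \ref{prop:countable-counterexample} combined with Lemma \ref{lem:enlargealphabet}), with periodicity playing no role. One small aside: the failure of condition \textup{(iv)}, rather than \textup{(iii)}, is precisely the statement that every infinite alphabet, in particular the prescribed $A$, carries a bijective non-reversible automaton, so the equivalence \textup{(i)}$\Leftrightarrow$\textup{(iv)} already gives what you need directly.
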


\begin{corollary}\label{cor:grigorchuk}
For the first Grigorchuk group $\mathfrak G$ and every infinite alphabet $A$, there exists a bijective cellular automaton $A^{\mathfrak G}\to A^{\mathfrak G}$ whose inverse is not a cellular automaton.
\end{corollary}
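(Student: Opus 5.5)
The statement is Corollary \ref{cor:grigorchuk}, and I will derive it directly from Theorem \ref{thm:main}. The only input needed is that the first Grigorchuk group $\mathfrak G$ is not locally finite. Once that is in hand, the final sentence of Theorem \ref{thm:main} (equivalently, Proposition \ref{prop:countable-counterexample} combined with Lemma \ref{lem:enlargealphabet}) produces, for every infinite alphabet $A$, a bijective cellular automaton $A^{\mathfrak G}\to A^{\mathfrak G}$ whose inverse is not a cellular automaton.

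To check that $\mathfrak G$ is not locally finite, I will use the standard facts from \cite{Grigorchuk1980}. First, $\mathfrak G$ is generated by the four involutions $a,b,c,d$ acting on the rooted binary tree, so it is finitely generated. Second, $\mathfrak G$ is infinite. Taking $F=\{a,b,c,d\}$, the subgroup $\langle F\rangle=\mathfrak G$ is then an infinite finitely generated subgroup, which is exactly the failure of local finiteness. This also fits the setup of Section 4, which starts from a finite $F$ with $\langle F\rangle$ infinite. Here $S=\{a,b,c,d\}$ is already symmetric, since each generator is an involution, and does not contain $1_G$.

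The only nontrivial ingredient is the infiniteness of $\mathfrak G$, and I will cite it rather than reprove it. If a sketch were wanted, I would use the standard argument. The stabilizer of the first level maps onto $\mathfrak G$ via a projection to one of the subtrees, so $\mathfrak G$ is isomorphic to a proper quotient-type section of a finite-index subgroup of itself. A finite group cannot have such a self-similar embedding with proper index. Alternatively, one can observe that $\mathfrak G$ acts transitively on each level $\{0,1\}^n$, so it has orbits of unbounded size.

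Periodicity of $\mathfrak G$ plays no role in the argument. It matters only in explaining why this example was previously open, because no infinite-order element is available, and it also places the result within Corollary \ref{cor:OP2}.
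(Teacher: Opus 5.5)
Your proposal is correct and follows essentially the paper's own argument. Since $\mathfrak G$ is finitely generated and infinite, it is not locally finite, so the non-locally-finite direction of Theorem~\ref{thm:main} applies. The paper phrases this through Corollary~\ref{cor:OP2} and mentions periodicity, but as you observe, periodicity is not needed. Both of your optional sketches of infiniteness are sound: the index-$2$ first-level stabilizer surjects onto $\mathfrak G$, and $\mathfrak G$ acts level-transitively.
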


\begin{proof}
The first Grigorchuk group is finitely generated, infinite and periodic \cite{Grigorchuk1980}.  Hence it is not locally finite, and the claim follows from Corollary \ref{cor:OP2}.
\end{proof}

\begin{remark}[What the construction measures]
The inverse of the automaton $T$ is globally defined and even has a finite formula at each coordinate.  What fails is uniform finite dependence.  A site of rank $n$ may require reading an entire directed chain of length $n$, and the rank track permits such finite chains with no common bound.  Thus non-local finiteness is detected not by an infinite computation in one configuration, but by a family of finite computations of unbounded length.
\end{remark}

\begin{remark}[Relation with earlier infinite-alphabet examples]
For groups containing an infinite-order element, earlier examples use the corresponding infinite cyclic direction to force an inverse with infinite memory; see \cite{CC2011Reversibility}.  The present construction replaces that fixed direction by a direction track taking values in a finite generating set.  The rank track makes every local dependency well-founded, while the existence of arbitrarily long finite pieces of a Cayley-graph ray prevents any uniform inverse memory.  This is why the argument applies equally to finitely generated infinite torsion subgroups.
\end{remark}

\end{document}